\numberwithin{equation}{section}
\theoremstyle{plain}
        \newtheorem{theorem}[equation]{Theorem}
        \newtheorem{lemma}[equation]{Lemma}
        \newtheorem{proposition}[equation]{Proposition}
	\newtheorem{definition}[equation]{Definition}
        \newtheorem{sinnadaitalica}[equation]{}
\theoremstyle{definition}
\newcommand{\mr}[1]{\buildrel {#1} \over \longrightarrow}
\newcommand{\Mr}[1]{\buildrel {#1} \over \Longrightarrow}
\newcommand{\pmr}[2]
{
\xymatrix@C=5ex@R=2.4ex
         {
          {} \ar@<1.6ex>[r]^{#1} 
	     \ar@<-1.1ex>[r]^{#2} & {}
         }
}
\newcommand{\pml}[2]
{
\xymatrix@C=5ex@R=2.4ex
         {
            {} 
          & {} \ar@<1.0ex>[l]_{#1} 
	       \ar@<-1.7ex>[l]_{#2}
         }
}
\newcommand{\cellr}[3]
{
\xymatrix@C=7ex@R=2.4ex
         {
          {} \ar@<1.6ex>[r]^{#1} 
          \ar@{}@<-1.3ex>[r]^{\!\! #2 \, \!\Downarrow}
                                         \ar@<-1.1ex>[r]_{#3} & {}
         }
}
\newcommand{\celll}[3]
{
\xymatrix@C=7ex@R=2.4ex
         {
            {} 
          & {} \ar@<1.0ex>[l]^{#1} 
          \ar@{}@<-1.7ex>[l]^{\!\! #2 \, \!\Downarrow}
	                                 \ar@<-1.7ex>[l]_{#3}
         }
}
\newcommand{\Colim}[1]{\underrightarrow{\cc{L}im}{\; #1}}
\newcommand{\cc}{\mathcal}
\newcommand{\cqd}{\hfill$\Box$}
\theoremstyle{definition}
\def\xto#1{\xrightarrow{#1}}
		\def\cat{\mathcal C at}
		\def\sit{\mathcal S it}
		\def\a{\alpha}				
		\def\g{\gamma}
		\def\D{\mathcal D}
                \def\l{\lambda}
		\def\e{\epsilon}
		\def\E{\mathcal E}
		\def\F{\mathcal F}
		\def\A{\mathcal A}
		\def\B{\mathcal B}
				\def\C{\mathcal C}
\begin{document}



\title{A construction of 2-cofiltered bilimits of topoi}

\author{Eduardo J. Dubuc, \; Sergio Yuhjtman}


 
\maketitle


{\sc introduction}

We show the existence of bilimits of 2-cofiltered diagrams of
topoi, generalizing the construction of cofiltered bilimits developed in 
 \cite{G2}. For any given such diagram, we show that it can be
represented by a 2-cofiltered diagram of small sites with finite
limits, and we construct a small site for the inverse limit
topos. This is done by taking the 2-filtered bicolimit of the
underlying categories and inverse image functors. We use the
construction of this bicolimit developed in \cite{DS}, where it is
proved that if the categories in the diagram have finite limits and
the transition functors are exact, then the bicolimit category has
finite limits and the pseudocone functors are exact. An application of our result here is the fact that every Galois topos  \mbox{has points \cite{D}.} 

\section{Background, terminology and notation} \label{background}

In this section we recall some $2$-category and topos 
theory that we shall explicitly need, and in this way fix notation 
and terminology. We also include some in-edit proofs when it seems necessary. We distinguish between \emph{small} and \emph{large}
sets. Categories are supposed to have small hom-sets. A category with
large hom-sets is called \emph{illegitimate}. 
 
\vspace{1ex}

{{\bf Bicolimits}}

By a \emph{2-category} we mean a $\cc{C}at$ enriched category, and
\mbox{\emph{2-functors}} 
are $\cc{C}at$ functors, where $\cc{C}at$ is the category of small categories. Given a 2-category, as usual, we denote
horizontal composition by juxtaposition, and vertical composition  
by a $''\circ''$.  We consider juxtaposition more binding than
$''\circ''$ (thus $xy\circ z$ means $(xy)\circ z$). If $\cc{A},\;
\cc{B}$ are $2$-categories ($\cc{A}$ small), we will 
denote by $[[\cc{A}, \cc{B}]]$ the $2$-category which has as objects
the $2$-functors, as arrows the  \emph{pseudonatural transformations},
and as $2$-cells the \emph{modifications} (see \cite{G}
I,2.4.). Given $F,\,G,\,H\,: \cc{A} \mr{} \cc{B}$,
there is a functor:
\begin{equation} \label{3cat}
[[\cc{A},\, \cc{B}]](G,\,H) \times [[\cc{A},\, \cc{B}]](F,\,G)
\mr{} [[\cc{A},\, \cc{B}]](F,\,H)
\end{equation} 
To have a handy reference
we will explicitly describe these data in the particular cases
we use. 

A \emph{pseudocone} of a diagram given by a 2-functor $\A \xto F \B$
to an object \mbox{$X \in \cc{B}$} is a pseudonatural 
transformation \mbox{$F \mr{h} X$} from $F$ to the 2-functor
which is constant at $X$. It consists of a family of 
arrows \mbox{$(h_A: FA \to X)_{A \, \in \cc{A}}$,} and a family
of invertible $2$-cells \mbox{$(h_u: h_A \to h_B \circ Fu)_{(A \mr{u}
    B) \, \in \cc{A}}$.} 
A morphism $g \Mr{\varphi} h$ of pseudocones (with same vertex) 
is a modification,  as such, it consists of a
family of $2$-cells 
$(g_A \Mr{\varphi_A} h_A)_{A \in \cc{A}}$. These data is subject to
the following:

\begin{sinnadaitalica} [{\it Pseudocone and morphism of pseudocone
    equations}] \label{PCequations} ${}$

\vspace{1ex}

pc0. $h_{id_A} = id_{h_A}$, \hfill    for each object $A$ \hspace{3ex}

pc1.  $h_v Fu \circ h_u  =  h_{vu}$, \hfill for each pair
of arrows \; $A \mr{u} B \mr{v} C$ \hspace{3ex}
                                     
pc2.  $h_B F\g \circ h_v = h_u$, \hfill for each 2-cell \;
$A \cellr{u}{\gamma}{v} B$  \hspace{3ex} 

pcM. $h_u \circ \varphi_A = \varphi_B Fu \circ g_u$, \hfill for each
arrow \; $A \mr{u} B$ \hspace{3ex}

\end{sinnadaitalica}
We state and prove now a lemma which, although expected, needs
nevertheless a proof, and for which we do not have a reference in the
literature. As the reader will realize, the statement concerns general
pseudonatural transformations, but we treat here the particular case
of pseudocones.
\begin{lemma} \label{translacion}
	Let $\A \xto F \B$ be a 2-functor and $F \mr{g} X$ a
        pseudocone. Let $FA \mr{h_A} X$ be a family of morphisms
        together with 
        invertible $2$-cells $g_A \Mr{\varphi_A} h_A$.
   Then, conjugating by $\varphi$ determines a pseudocone structure
for $h$, unique such that $\varphi$ becomes an isomorphism of
pseudocones. 
\end{lemma}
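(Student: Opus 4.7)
\medskip

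\noindent\textbf{Proof plan.} The morphism equation pcM, when read for the modification $\varphi$, forces the definition
\[
h_u \;:=\; (\varphi_B\, Fu)\circ g_u \circ \varphi_A^{-1},
\]
since $\varphi_A$ is invertible. So uniqueness is immediate: any pseudocone structure on $h$ for which $\varphi$ is a morphism of pseudocones must satisfy $h_u\circ\varphi_A = \varphi_B Fu\circ g_u$, and this equation determines $h_u$. The invertibility of each $\varphi_A$ also makes the resulting $\varphi$ automatically an isomorphism of pseudocones, with inverse given pointwise by $\varphi_A^{-1}$.

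It then remains to verify that the family $\{h_u\}$ so defined satisfies pc0, pc1, pc2. Each check is a routine diagram chase in which one substitutes the definition of $h_u$ and applies the corresponding axiom for $g$, cancelling the intermediate factors $\varphi\circ\varphi^{-1}$. For pc0 one has
\[
h_{id_A}=(\varphi_A\, F(id_A))\circ g_{id_A}\circ \varphi_A^{-1}
       =\varphi_A\circ id_{g_A}\circ\varphi_A^{-1}=id_{h_A},
\]
using $F(id_A)=id_{FA}$ and pc0 for $g$. For pc2, given $\gamma: u\Rightarrow v$ in $\A$, substituting and inserting $id=\varphi_B^{-1}\circ\varphi_B$ between the two factors of the left-hand side reduces the identity to pc2 for $g$.

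The main (and only genuinely nontrivial) computation is pc1, which needs
\[
h_v\,Fu\;\circ\; h_u \;=\; h_{vu}
\]
for $A\xrightarrow{u}B\xrightarrow{v}C$. Expanding both sides with the definition and using the middle-four interchange law to rewrite $(\varphi_C Fv\circ g_v\circ\varphi_B^{-1})\,Fu$ as $(\varphi_C\,F(v)Fu)\circ(g_v\,Fu)\circ(\varphi_B^{-1}\,Fu)$, the two inner $\varphi_B$-factors cancel with the $\varphi_B Fu$ coming from $h_u$, leaving $(\varphi_C\,F(v)Fu)\circ(g_v\,Fu\circ g_u)\circ\varphi_A^{-1}$. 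Now pc1 for $g$ gives $g_v Fu\circ g_u=g_{vu}$, and $F(v)F(u)=F(vu)$ (since $F$ is a strict 2-functor), so the expression equals $(\varphi_C\,F(vu))\circ g_{vu}\circ\varphi_A^{-1}=h_{vu}$, as required. The only subtlety is the careful use of interchange and strictness of $F$ in this step; everything else is bookkeeping.
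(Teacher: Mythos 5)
Your proposal is correct and follows essentially the same route as the paper: define $h_u = \varphi_B Fu \circ g_u \circ \varphi_A^{-1}$ as forced by pcM (which gives uniqueness), then verify pc0--pc2 by substitution, cancellation of the $\varphi\circ\varphi^{-1}$ factors, and the interchange law together with strictness of $F$. The only cosmetic difference is that the paper spells out the interchange step in pc2 (showing $(\varphi_B Fu)^{-1}\circ h_B F\gamma\circ\varphi_B Fv = g_B F\gamma$) with the same care you reserve for pc1, but the content is identical.
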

\begin{proof}
If $\varphi$ is to become a pseudocone morphism, the equation pcM.
\mbox{$\varphi_B Fu \,\circ\, g_u = h_u \circ \varphi_A$} must hold.
Thus, $h_u = \varphi_B Fu \circ g_u \circ \varphi_A^{-1}$
determines and \mbox{defines $h$.}  The pseudocone equations
\ref{PCequations} for $h$ follow from the respective equations
for $g$: 

	pc0.
$h_{id_A}\;=\;\varphi_A \circ g_{id_A} \circ \varphi_A^{-1} \;=\;
 \varphi_A \circ id_{g_A} \circ \varphi_A^{-1} \;=\; id_{h_A} $ 
	
	\vspace{5pt}

	pc1. $A \xto u B \xto v C$:

$h_v Fu \circ h_u \;\; = $ \hfill 
$(\varphi_C Fv \circ g_v \circ \varphi_B^{-1})Fu \circ \varphi_B Fu
  \circ g_u \circ \varphi_A^{-1} \;\;\;=$  \hspace{9ex}
 
\hfill $\varphi_C F(vu) \circ g_v Fu \circ \varphi_B^{-1} Fu
\circ \varphi_B Fu \circ g_u \circ \varphi_A^{-1} \;\;\;=$ 
\hspace{9ex}

\hfill $\varphi_C F(vu) \circ g_v Fu \circ g_u \circ\varphi_A^{-1}
\;\;\;=$  \hspace{9ex}

\hfill $\varphi_C F(vu) \circ g_{vu} \circ \varphi_A^{-1} \;\;\;=$
\hspace{2ex} $h_{vu}$ \hspace{2ex} 
	 
	pc2. For $\xymatrix{A \ar@<1ex>[r]^u \ar@{}[r]|{\Uparrow \g}
\ar@<-1ex>[r]_v & B}$ we must see 
	$h_B F\g \circ h_v = h_u$. This is the same as
	$h_B F\g \circ \varphi_B Fv \circ g_v \circ
        \varphi_A^{-1}=\varphi_B Fu \circ g_u
        \circ\varphi_A^{-1}$. 
	Canceling $\varphi_A^{-1}$ and composing with 
        $(\varphi_B Fu)^{-1}$ yields  
	(1) $(\varphi_B Fu)^{-1} \circ h_B F\g \circ \varphi_B Fv
        \circ g_v = g_u$. 
From the compatibility between vertical and horizontal
composition it \mbox{follows} \mbox{$(\varphi_B Fu)^{-1} \circ h_BF\g \circ \varphi_B Fv \;=$}  
$(\varphi_B^{-1} \circ h_B \circ \varphi_B)(Fu \circ F\g \circ Fv) = g_B
F\g$. Thus, after replacing,
(1) becomes $g_B F\g \circ g_v = g_u$. 
\end{proof}

Given a small 2-diagram $\cc{A} \mr{F} \cc{B}$, the category of pseudocones and its 
morphisms is, by definition,  \mbox{$pc \cc{B}(F, X) =
  [[\cc{A},\, 
      \cc{B}]](F,\, X)$.} Given a pseudocone $F \mr{f} Z$ and a
$2$-cell 
$Z \cellr{s}{\xi}{t} X$, it is clear and straightforward how to define
a morphism of pseudocones $F \cellr{sf}{\xi f}{tf} X$  which is the
composite  
$F \mr{f} Z \cellr{s}{\xi}{t} X$. This is a particular case of
\ref{3cat}, thus composing with $f$  
determines a functor (denoted $\rho_f$)          
\mbox{$\cc{B}(Z,\, X)  \mr{\rho_f} pc \cc{B}(F,\, X)$.} 

%

\begin{definition} \label{bicolimit}
A pseudocone $F \mr{\lambda} L$ is a \emph{bicolimit} of $F$ if for
every object $X \in \cc{B}$, the
functor $\cc{B}(L,\, X) \mr{\rho_\lambda} pc\cc{B}(F,\, X)$ is
an  
equivalence of categories. This amounts to the following:            

{\it bl}) Given any pseudocone $F \mr{h} X$, there exists an arrow
$L \xto{\ell} X$ and an invertible morphism of pseudocones 
$\;h \Mr{\theta} \ell \lambda$. Furthermore, given any other 
$L \xto{t} X$  and  $\;h \Mr{\varphi} t \lambda$,   
there exists a \emph{unique}
$2$-cell $\;\ell \Mr{\xi} t$ such that  
$\varphi = (\xi \lambda) \circ \theta$ (if $\varphi$ is invertible,
then so it is $\xi$).

\end{definition}
\begin{definition}
When the functor $\cc{B}(L,\, X) \mr{\rho_\lambda} pc\cc{B}(F,\,
  X)$ 
is an isomorphism of categories, the bicolimit is said to be a 
\emph{pseudocolimit}.
\end{definition}
%
%

It is known that the $2$-category $\cat$ of small categories has all
 small pseudocolimits, then a ``fortiori'' all small
  bicolimits (see for example \cite{S}). Given a 2-functor $\cc{A}
  \mr{F} \cc{C}at$ we denote by $\Colim{F}$ the vertex of a bicolimit
  cone. 

In \cite{DS} a special
  construction 
  of the pseudocolimit of a 2-filtered  diagram of categories
   (not necessarily small) is made, and using this construction it is
  proved a result 
  (\mbox{theorem \ref{key} below}) which is the key to our
  construction of small
  $2$-filtered bilimits of topoi. Notice that even if the categories
  of the system are large, condition {\it bl)} in definition
  \ref{bicolimit} makes sense and it defines the bicolimit of large
  categories.

We denote by $\cc{CAT}_{fl}$ the \emph{illegitimate} (in the sense
  that its hom-sets are large) 2-category of
finitely complete 
categories and exact (that is, finite limit preserving) functors.

 \begin{theorem}[\cite{DS} Theorem 2.5] \label{key}
  $\cc{CAT}_{fl} \subset
 \cc{CAT}$ is closed under 2-filtered 
  pseudocolimits. Namely, given any 2-filtered diagram $\A \xto F
  \cc{CAT}_{fl}$, the
 pseudocolimit pseudocone $FA \mr{\lambda_A}
 \Colim{F}$ taken in $\cc{CAT}$ is a
 pseudocolimit cone in $\cc{CAT}_{fl}$. If the index 2-category
 $\cc{A}$ as well as all the categories $FA$ are small, then
 $\Colim{F}$ is a small category.
 \cqd
\end{theorem}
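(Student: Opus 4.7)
The plan is to exploit the explicit model of 2-filtered pseudocolimits of categories constructed in \cite{DS}. In that model, objects of $\Colim{F}$ are pairs $(A, x)$ with $A \in \A$ and $x \in FA$, and morphisms $(A, x) \mr{} (B, y)$ are equivalence classes of ``zigzag'' data involving arrows of $\A$ and $2$-cells; the pseudocone leg $\lambda_A$ sends $x$ to $(A, x)$. The combinatorial heart of the proof is a \emph{lifting lemma}: any finite diagram $D \colon \J \mr{} \Colim{F}$ can, up to canonical isomorphism of diagrams, be lifted to a strict diagram $\widetilde{D} \colon \J \mr{} FA$ living inside a single fiber.

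To prove the lifting lemma, one first uses the 2-filtered axioms on $\A$ to bring the finitely many objects of $D$ back to a common stage $A_0$, then lifts each of the finitely many morphisms of $D$ to morphisms in $FA_0$ (replacing $A_0$ by a further stage if needed), and finally, for each commutativity relation demanded by $\J$, uses the 2-filtered axiom on parallel pairs of $2$-cells to force that relation to hold \emph{strictly} after advancing to yet another stage $A$. Finiteness of $\J$ is essential, so that this bootstrapping terminates. With $\widetilde{D}$ in hand, set $L := \lim \widetilde{D}$ inside $FA$ (using that $FA \in \cc{CAT}_{fl}$) and $\bar L := \lambda_A(L)$. Any cone over $D$ with vertex $(B, y)$ is itself finite data, so the lifting lemma, applied to the whole cone, produces a lifted cone over a translate of $\widetilde{D}$ in some $FC$ receiving arrows from $A$ and $B$; exactness of the transition functor $FA \mr{} FC$ moves $L$ to the limit there, whence the cone factors, and the factorization descends to a factorization through $\bar L$ in $\Colim{F}$, unique up to unique $2$-cell (two competing factorizations become equal after advancing to a common later stage).

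This construction also shows that each leg $\lambda_A$ is exact: when the original diagram already lives strictly in $FA$ one may take $\widetilde{D} = D$, and the limit of $D$ in $\Colim{F}$ is then $\lambda_A(\lim D)$. To promote the pseudocolimit from $\cc{CAT}$ to $\cc{CAT}_{fl}$ it suffices to check that, for $X \in \cc{CAT}_{fl}$, a functor $\ell \colon \Colim{F} \mr{} X$ is exact if and only if every composite $\ell \circ \lambda_A$ is exact; the nontrivial direction uses precisely that every finite limit in $\Colim{F}$ is the image under some $\lambda_A$ of a finite limit in $FA$. Modifications carry no extra exactness conditions, so the isomorphism $\cc{CAT}(\Colim{F}, X) \cong pc\cc{CAT}(F, X)$ restricts to an isomorphism on the full sub-2-categories cut out by exact $1$-cells, which is what is needed. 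Smallness is read off the presentation: when $\A$ and all $FA$ are small, both the set of representing pairs $(A,x)$ and the sets of zigzag representatives of morphisms are small.

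The main obstacle is the lifting lemma itself, because it is where the whole combinatorial force of 2-filteredness is concentrated. The delicate point is the passage from ``triangles commute only up to a $2$-cell in the colimit'' to ``triangles commute strictly at some stage,'' which requires invoking the 2-filtered axiom on parallel $2$-cells once per commutativity constraint of $\J$. Once this is secured, the remaining steps---existence of finite limits in $\Colim{F}$, exactness of the legs, the promotion to $\cc{CAT}_{fl}$, and smallness---all follow from the same ``lift, compute at a stage, push forward'' routine.
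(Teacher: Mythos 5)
The paper gives no proof of this statement: it is imported verbatim as Theorem 2.5 of \cite{DS}, so the only ``in-paper proof'' is the citation. Your outline is a faithful reconstruction of the strategy used in that reference --- the explicit presentation of the 2-filtered pseudocolimit, the lifting of finite diagrams (and of cones over them) to a single stage via the 2-filteredness axioms, stagewise computation of limits pushed forward by the exact transition and pseudocone functors, and the restriction of the isomorphism $\cc{CAT}(\Colim{F},X)\cong pc\cc{CAT}(F,X)$ to the exact functors --- and I see no gap in it.
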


{\bf Topoi}


By a \emph{site} we mean a category furnished with a (Grothendieck)
topology, and a small set of objects capable of covering any
object (called \emph{topological generators} in \cite{G1}). \emph{To
  simplify we will consider  only sites with finite 
  limits.}    
A \emph{morphism} of sites with finite limits $\D \mr{f} \C$ is a
\emph{continous} (that is, cover preserving) and exact functor in the
other direction $\C \mr{f^*} \D$.
%
%
A $2$-cell $\cc{D} \cellr{f}{\gamma}{g} \cc{C}$ is a natural
 transformation $\cc{C} \cellr{g^*}{\gamma}{f^*}
 \cc{D}$ \footnote{Notice that $2$-cells are also taken in the
   opposite 
   direction. This is Grothendieck original convention, later changed
   by some authors.}. Under the presence of topological generators it
 can be easily seen there is only a small set of
 natural 
  transformations between any two continous functors. We denote by
  $\sit$ the resulting 2-category of sites with 
 finite 
limits.  We denote by $\cc{S}it^*$ the $2$-category whose objects
are the sites, but taking as arrows and $2$-cells the functors $f^*$
and natural transformations respectively. Thus $\cc{S}it$ is obtained
by formally inverting the arrows and the $2$-cells of $\cc{S}it^*$. We
have by definition 
$\cc{S}it(\cc{D},\cc{C}) = \cc{S}it^*(\cc{C},\cc{D})^{op}$.

\vspace{1ex}

A topos (also ``Grothendieck topos'') is a category equivalent
to the category of sheaves on a site. Topoi are
considered as 
sites furnishing them with the canonical topology. This determines a
full subcategory \mbox{$\cc{T}op^* \subset \cc{S}it^*$,}
$\cc{T}op^*(\cc{F},\,\cc{E})\;=\; \cc{S}it^*(\cc{F},\,\cc{E})$.

 A morphism of topoi (also ``geometric morphism'') $\E \xto f \F$ is a
 pair of adjoint functors ${f^* \dashv f_*}$  
(called inverse and direct image respectively) 
 $\xymatrix{\E \ar@<.5ex>[r]^{f_*} & \F \ar@<.5ex>[l]^{f^*}}$
 together with an adjunction isomorphism $[f^*C,D] \xto \cong
 [C,f_*D]$. Furthermore, $f^*$ is required to preserve finite
 limits.   
Let $\cc{T}op$ be the \mbox{2-category} of topos with geometric
morphisms. 2-arrows are pairs  of natural transformations ($f^*
\Rightarrow g^*$,   
$g_* \Rightarrow f_*$) compatible with the adjunction (one of the
natural transformations completely determines the other). 
The inverse image  $f^*$ of a morphism is an arrow in $\cc{T}op^*
\subset \cc{S}it^*$. This determines a forgetful 2-functor (identity
on the objects) 
$\cc{T}op \mr{} \cc{S}it$ which establish an equivalence of categories 
\mbox{$\cc{T}op(\cc{E},\,\cc{F}) \;\cong\;
  \cc{S}it(\cc{E},\,\cc{F})$}. Notice that    
$\cc{T}op(\cc{E},\cc{F}) \cong \cc{T}op^*(\cc{F},\cc{E})^{op}$, not an
equality. 

\vspace{1ex}

We recall a basic result in the theory of morphisms of Grothendieck
topoi  \cite{G1} expose IV, 4.9.4. (see for example \cite{MM} Chapter VII, section 7).
\begin{lemma} \label{Diaconescu}
	Let $\C$ be a site with finite limits, and 
$\C \xto {\e^*} \widetilde{\C}$ the canonical morphism of sites to  
	the topos of sheaves $\widetilde{\C}$. Then
        for  any topos $\F$, composing
        with $\e^*$ determines a functor
	$\cc{T}op^*(\widetilde{\C},\, \F) \xto {\cong}
        \cc{S}it^*(\C,\, \F)$ 
        which is an equivalence of categories. Thus, 
	$\cc{T}op(\cc{F},\, \widetilde{\C}) \xto {\cong}
        \cc{S}it(\F,\, \C)$. 
\end{lemma}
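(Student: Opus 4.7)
My plan is to exhibit an explicit quasi-inverse to the composition-with-$\e^*$ functor, proving essential surjectivity and fully-faithfulness by hand; everything then reduces to the classical construction of a geometric morphism out of a continuous exact functor on a site.

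First I would check that $\e^*\colon\C\to\widetilde{\C}$ is itself a morphism in $\cc{S}it^*$: it is exact (Yoneda is exact, and sheafification is exact), and it is continuous because covers in $\C$ become jointly epimorphic in $\widetilde{\C}$. Thus the composition functor $\cc{T}op^*(\widetilde{\C},\F)\xto{\rho_{\e^*}}\cc{S}it^*(\C,\F)$ is well-defined. For the quasi-inverse, given $g^*\colon\C\to\F$ continuous and exact, I would build the inverse image $\widetilde{g}^*\colon\widetilde{\C}\to\F$ in two stages: first left Kan extend $g^*$ along Yoneda $\C\mr{y}\widehat{\C}$ to get $L\colon\widehat{\C}\to\F$ with $L(P)=\colim (g^*\circ\pi_P)$, where $\pi_P\colon\C/P\to\C$ is the category of elements; second, observe that continuity of $g^*$ forces $L$ to invert the local isomorphisms used in sheafification, so $L$ descends to $\widetilde{g}^*\colon\widetilde{\C}\to\F$ with $\widetilde{g}^*\circ\e^*\cong g^*$. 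The right adjoint is $\widetilde{g}_*(F)=\F(g^*(-),F)$, which is a sheaf exactly because $g^*$ sends covering sieves to jointly epimorphic families.

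The technical heart, and the step I expect to be the main obstacle, is showing that $\widetilde{g}^*$ preserves finite limits. On the presheaf level this uses crucially that $\C$ has finite limits: the category of elements $\C/P$ for a presheaf $P$ is then filtered with respect to finite-limit cones coming from $\C$, so finite limits of representables commute with the colimits defining $L$. Concretely, one reduces to the case of a finite limit of representables by writing any presheaf as a colimit of representables and using that $L$ is cocontinuous; for a finite diagram of representables $y(C_i)$, exactness of $g^*$ gives $L(\lim y(C_i))=L(y(\lim C_i))=g^*(\lim C_i)=\lim g^*(C_i)=\lim L(y(C_i))$. Exactness then descends to sheaves because sheafification $a$ is exact and $\widetilde{g}^*\circ a = L$ on presheaves.

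It remains to check that $\rho_{\e^*}$ is an equivalence. Essential surjectivity is precisely the construction above, together with the isomorphism $\widetilde{g}^*\circ\e^*\cong g^*$ coming from the universal property of left Kan extension (and the fact that $\e^*$ lands in sheaves). For fully-faithfulness, given geometric morphisms $f,h\colon\F\to\widetilde{\C}$, any natural transformation $\alpha\colon f^*\e^*\Rightarrow h^*\e^*$ extends uniquely to $f^*\Rightarrow h^*$: both $f^*$ and $h^*$ are cocontinuous (being left adjoints), $\widetilde{\C}$ is generated under colimits by the sheafified representables $\e^*(C)$, and so any morphism between them is determined by, and can be reconstructed from, its restriction along $\e^*$. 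This last verification is straightforward once the Kan-extension machinery is in place, leaving exactness of $\widetilde{g}^*$ as the only real content.
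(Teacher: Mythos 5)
The paper offers no proof of this lemma: it is quoted as a classical result, with pointers to SGA4, Exp.\ IV, 4.9.4 and to Mac Lane--Moerdijk, Ch.\ VII, \S 7, so the only fair comparison is with the standard proof in those sources. Your outline is essentially that proof --- $\e^*$ is a site morphism, the quasi-inverse is $\mathrm{Lan}_y g^*$ descended to sheaves with right adjoint $\F(g^*(-),-)$, and fully-faithfulness comes from cocontinuity of inverse images plus the fact that the objects $\e^*(C)$ generate $\widetilde{\C}$ under colimits --- and you correctly isolate left exactness of the extension as the only real content.

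That, however, is also the one step where your argument as written would fail. You cannot ``reduce to the case of a finite limit of representables by writing any presheaf as a colimit of representables and using that $L$ is cocontinuous'': finite limits do not commute with arbitrary colimits, so preservation of finite limits of representables together with cocontinuity does not yield preservation of all finite limits. Nor is the category of elements of an arbitrary presheaf filtered when $\C$ has finite limits (take $P = y(C) \sqcup y(D)$: its category of elements is disconnected). The correct mechanism is flatness: when $\C$ has finite limits, a left exact $g^*$ is flat (filtering), and it is flatness --- Mac Lane--Moerdijk VII.6, Theorems 2 and 3 --- that makes $- \otimes_{\C} g^* = \mathrm{Lan}_y g^*$ left exact; the filteredness that actually enters is the filtering condition on the comma categories attached to each finite diagram, not filteredness of the category of elements of $P$. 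With that repair the rest of your outline goes through and reproduces the cited classical proof.
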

By the comparison lemma \cite{G1} Ex. III 4.1 we can state
it in the following form, to be used in the proof of lemma 
\ref{pseudodiaconescu}. 
\begin{lemma} \label{Diaconescubis}
	Let $\E$ be any topos and $\cc{C}$ any small set of generators
        closed under finite limits (considered as a site with the
        canonical topology). Then, 
        for  any topos $\F$, the
inclusion $\cc{C} \subset \cc{E}$ induce a restriction
functor 
\mbox{$\cc{T}op^*(\E,\F) \mr{\rho} \cc{S}it^*(\C,\F)$} which
is an equivalence of categories.  
\end{lemma}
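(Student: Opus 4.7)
The plan is to deduce this from Lemma \ref{Diaconescu} via the comparison lemma cited just above the statement. That lemma supplies, for a small full subcategory $\C \subset \E$ whose objects form a generating family, an equivalence of topoi $\E \xto{\simeq} \widetilde{\C}$ (sending $E \in \E$ to the sheaf $C \mapsto \E(C,E)$) as soon as $\C$ is endowed with the topology induced by the canonical topology of $\E$.

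First I would check that the induced topology on $\C$ coincides with the canonical topology on $\C$ used in the statement. A sieve on $C \in \C$ is covering in the induced topology iff the corresponding family is universally effective-epimorphic in $\E$. Because $\C$ is closed under finite limits in $\E$, pullbacks in $\C$ agree with pullbacks in $\E$; because $\C$ is a generating family, a family into an object of $\C$ is epimorphic in $\E$ iff it is epimorphic in $\C$. Combining these two remarks, the induced topology is the universally epimorphic topology on $\C$, which, since $\C$ has finite limits, is precisely the canonical (i.e.\ maximal subcanonical) topology.

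Second, I would identify the inclusion $i : \C \hookrightarrow \E$ with the canonical site morphism $\e^*: \C \to \widetilde{\C}$ of Lemma \ref{Diaconescu}, modulo the equivalence $\E \simeq \widetilde{\C}$. This is a Yoneda computation: for $C \in \C$ the object $iC \in \E$ corresponds under the equivalence to the sheaf $\E(-,C)|_\C \cong \C(-,C)$, i.e.\ to $\e^*C$.

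Third, I would assemble the equivalences. For any topos $\F$, pre-composition with the equivalence $\widetilde{\C} \simeq \E$ yields an equivalence $\cc{T}op^*(\E,\F) \simeq \cc{T}op^*(\widetilde{\C},\F)$, and Lemma \ref{Diaconescu} supplies an equivalence $\cc{T}op^*(\widetilde{\C},\F) \xto{\simeq} \cc{S}it^*(\C,\F)$ via pre-composition with $\e^*$. By step 2 the composite $\cc{T}op^*(\E,\F) \to \cc{S}it^*(\C,\F)$ is naturally isomorphic to the restriction functor $\rho$, which is therefore an equivalence. The main obstacle I expect is step 1 — pinning down that the topology on $\C$ induced from the canonical topology of $\E$ is literally the canonical topology on $\C$ as stated; once that identification is in hand, the remaining two steps are formal reassemblies of known equivalences.
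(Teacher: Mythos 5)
Your proposal follows exactly the route the paper intends: the paper gives no written proof of this lemma, merely asserting that it is Lemma \ref{Diaconescu} restated via the comparison lemma (SGA4, Exp.\ III, 4.1), which is precisely your decomposition into the equivalence $\E \simeq \widetilde{\C}$, the identification of the induced topology on $\C$ with its canonical topology, and the composition with the equivalence of Lemma \ref{Diaconescu}. Your write-up supplies the details the paper leaves implicit, and the argument is correct.
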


\section{2-cofiltered bilimits of topoi}

Our work with sites is auxiliary to prove our results for topoi, and
for this all we need are sites with finite limits. The 2-category
$\sit$ has all small \mbox{2-cofiltered} pseudolimits, which 
are obtained by furnishing the 2-filtered pseudocolimit in
$\cc{CAT}_{fl}$ (\ref{key}) of the
underlying categories with the coarsest topology making the cone
injections site morphisms. Explicitly:  
\begin{theorem}\label{sitelimit}
Let $\cc{A}$ be a small 2-filtered 2-category, and \mbox{$\cc{A}^{op} \mr{F}
 \cc{S}it$} \mbox{($\cc{A} \mr{F} \cc{S}it^*$)} a \mbox{2-functor.}
 Then, the 
   category $\Colim{F}$ is  
furnished with a topology such that the pseudocone functors $FA
\mr{\lambda_A^*} \Colim{F}$ become continuous and induce an isomorphism
of categories 
\mbox{$\cc{S}it^*[\Colim{F},\, \cc{X}] \mr{\rho_{\lambda}}
\cc{P}\cc{C}{\cc{S}it^*}[F,\, \cc{X}]$.} The corresponding site 
is then a 
pseudocolimit of $F$ in the 2-category $\cc{S}it^*$. If each $FA$ is a
small category, then so it is $\Colim{F}$.
%
%
\end{theorem}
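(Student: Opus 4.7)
The plan is to leverage Theorem \ref{key}. Viewing $F$ as a $2$-functor into $\cc{CAT}_{fl}$ by forgetting the topologies (the inverse image functors of site morphisms are by assumption exact), one obtains the finitely complete category $L := \Colim{F}$ together with an exact pseudocone $\lambda_A^*: FA \to L$, with $L$ small whenever $\cc{A}$ and all the $FA$ are. Only the topology and its universal property remain to be supplied.

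I would define the topology $J$ on $L$ as the coarsest one rendering every $\lambda_A^*$ continuous. Concretely, it is generated by the pretopology whose covers take the form $\{\lambda_A^*(f_i): \lambda_A^* Y_i \to \lambda_A^* Y\}$ with $\{f_i\}$ a cover in some $FA$. Stability under pullback in $L$ follows from $2$-filteredness and the explicit \cite{DS} description: given such a cover at stage $A$ and a morphism $Z \to \lambda_A^* Y$ coming from stage $B$, one chooses a common stage $C$ with arrows $A \to C$, $B \to C$ in $\cc{A}$; continuity of the transition functors transports the cover to a cover in $FC$ and the morphism to a morphism in $FC$, and the ordinary pullback there maps to the desired pullback in $L$ under $\lambda_C^*$. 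Topological generators for $L$ are obtained by forming the (small) union, over $A \in \cc{A}$, of the $\lambda_A^*$-images of small generating sets of each $FA$; these generate because every object of $L$ is isomorphic to some $\lambda_A^* Y$.

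For the universal property at a site $\cc{X}$, let $h: F \Rightarrow \cc{X}$ be a pseudocone in $\cc{S}it^*$, so each $h_A^*: FA \to \cc{X}$ is continuous and exact. Forgetting continuity, Theorem \ref{key} supplies a unique exact $\ell^*: L \to \cc{X}$ with $\ell^* \lambda = h$ strictly as pseudocones in $\cc{CAT}_{fl}$. On a generating cover $\{\lambda_A^* Y_i \to \lambda_A^* Y\}$ the functor $\ell^*$ produces $\{h_A^* Y_i \to h_A^* Y\}$, which is a cover in $\cc{X}$ because $h_A^*$ is continuous; exactness of $\ell^*$ then propagates this to all of $J$, so $\ell^*$ is continuous. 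Conversely, precomposition with $\lambda$ carries continuous exact functors $L \to \cc{X}$ to pseudocones in $\cc{S}it^*$. Modifications on either side are literally the same data (natural transformations between the underlying functors satisfying the same equations), so the isomorphism $\cc{S}it^*[L, \cc{X}] \to \cc{P}\cc{C}\cc{S}it^*[F, \cc{X}]$ is the restriction of the corresponding isomorphism of Theorem \ref{key} to the continuous parts.

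The main obstacle is the concrete description of $J$ itself, namely verifying that the coarsest topology making all $\lambda_A^*$ continuous coincides with the pretopology above, with covers recognizable stage by stage, and that there are enough topological generators. Both rest on the concrete realization of $2$-filtered pseudocolimits from \cite{DS}, which lets one trace every object and arrow of $L$ back to some stage $FA$, combined with $2$-filteredness to move competing data to a common stage.
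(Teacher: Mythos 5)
Your proposal is correct and follows essentially the same route as the paper: apply Theorem \ref{key} to get the underlying finitely complete category and the isomorphism at the level of $\cc{C}at_{fl}$, endow $\Colim{F}$ with the topology generated by the $\lambda_A^*$-images of covers, observe that the unique exact functor $\ell^*$ with $\ell^*\lambda = h$ is continuous because it preserves the generating covers (hence all covers), and obtain topological generators from the fact that every object of $\Colim{F}$ is of the form $\lambda_A^* c$. The only difference is that you verify pullback-stability of the generating families to exhibit a pretopology, whereas the paper simply works with the generated topology and does not need that verification.
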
 
\begin{proof}
Let $FA \mr{\lambda_A} \Colim{F}$ be the colimit pseudocone in
$\cc{CAT}_{fl}$.  We give $\Colim{F}$ the topology generated by the
families  
$\lambda_A c_\alpha \mr{} \lambda_A c$, where $c_\a \mr{} c$  
is a covering in some $FA$, $A \in \A$. With this topology, the
functors $\lambda_A$ become continuous, thus they correspond to  
site morphisms. This determines the upper horizontal arrow in the
following diagram (where the vertical arrows are full subcategories
and the lower horizontal arrow is an isomorphism):
$$
\xymatrix
         {
            \cc{S}it[\Colim{F},\, \cc{X}]
            \ar[r]  \ar[d]
          & pc\cc{S}it[F,\, \cc{X}]
            \ar[d]
         \\
            \cc{C}at_{fl}[\Colim{F},\, \cc{X}]
            \ar[r]^\cong
          & pc\cc{C}at_{fl}[F,\, \cc{X}]
         }
$$ 
To show that the upper horizontal arrow is an isomorphism we have to
check that given a pseudocone $h \in pc\cc{S}it[F,\, \cc{X}]$, the
unique functor \mbox{$f \in \cc{C}at_{fl}[\Colim{F},\, \cc{X}]$},
corresponding to 
$h$ under the lower arrow, is continuous. But this is clear
since from the equation $f\lambda = h$ it follows that it preserves
the generating covers, and thus all covers as well. Finally, by the
construction of $\Colim{F}$ in \cite{DS} we know that every object in
$\Colim{F}$ is of the form  $\lambda_A c$ for some $A \in \cc{A}$, 
$c \in FA$. It follows then that the collection of objects of the form
$\lambda_A c$, with $c$ varying on the set of topological generators
of each $FA$, is a
set of topological generators for  $\Colim{F}$.
\end{proof}

In the next proposition we show that any 2-diagram of topoi
restricts to a 2-diagram of small sites with finite limits by means of a 2-natural (thus a
fortiori pseudonatural) transformation. 
\begin{proposition}\label{res}
Given a 2-functor $\cc{A}^{op} \mr{\cc{E}} \cc{T}op$ there exists a 
2-functor $\cc{A}^{op} \mr{\cc{C}} \cc{S}it$ such that:

i) For any $A\in \A$, $\C_A$ is a \emph{small} full generating
subcategory of 
$\E_A$ 
closed under finite limits, considered as a site with the canonical
topology. 

ii) The
arrows and the $2$-cells in the 
$\cc{C}$ diagram are the restrictions of those in the $\cc{E}$
diagram: For any $2$ cell $A \cellr{u}{\gamma}{v} B$ in $\cc{A}$, the
following diagram commutes (where we omit notation for the action of
the $2$ functors on arrows and $2$-cells):
$$
\xymatrix@C=8ex
          {
              \cc{E}_A \ar@<1ex>[r]^{u^*} 
                 \ar@<-1.6ex>[r]_(0.5){v^*}^(0.5){\gamma \:\Downarrow}
                 & \cc{E}_B
            \\
              \cc{C}_A \ar@<1ex>[r]^{u^*} 
                 \ar@<-1.6ex>[r]_(0.5){v^*}^(0.5){\gamma \:\Downarrow}    \ar@{^(->}^{i_A}[u]
            & \cc{C}_B  \ar@{^(->}^{i_B}[u]
           }
$$
\end{proposition}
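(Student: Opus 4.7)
The plan is to build all the $\cc{C}_A$ simultaneously as the union of a countable ascending chain of small full subcategories, each stage closing up under finite limits and under all the transition functors $u^*$. The crucial observation is that strict 2-naturality of the inclusions in (ii) forces $u^*(\cc{C}_A) \subseteq \cc{C}_B$ for every arrow $u\colon A\to B$ of $\cc{A}$, so the $\cc{C}_A$ cannot be chosen independently---they must be enlarged in tandem.

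First, for each $A \in \cc{A}$, I would fix a small set of generators $\cc{G}_A$ of the topos $\cc{E}_A$ and let $\cc{C}_A^{(0)}$ be the full subcategory of $\cc{E}_A$ obtained by closing $\cc{G}_A$ under finite limits in $\cc{E}_A$; this is small because finite-limit closure of a small subset of a locally small category is small. Inductively, I would set $\cc{C}_B^{(n+1)}$ to be the finite-limit closure in $\cc{E}_B$ of
\[
\cc{C}_B^{(n)} \;\cup\; \bigcup_{u\colon A\to B} u^*\bigl(\cc{C}_A^{(n)}\bigr),
\]
the inner union being over the small set of arrows of $\cc{A}$ with codomain $B$. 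Each $\cc{C}_A^{(n)}$ remains small, and I would take $\cc{C}_A = \bigcup_{n\geq 0} \cc{C}_A^{(n)}$. Then $\cc{C}_A$ is small, full in $\cc{E}_A$, generating, closed under finite limits (any finite diagram lies in some $\cc{C}_A^{(n)}$, hence its limit lies in $\cc{C}_A^{(n+1)}$), and satisfies $u^*(\cc{C}_A) \subseteq \cc{C}_B$ by the very choice of the iteration.

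Endowing each $\cc{C}_A$ with the canonical topology, I would then verify that the restriction $u^*\colon \cc{C}_A \to \cc{C}_B$ is a morphism of sites in the opposite direction: exactness is immediate since finite limits in $\cc{C}_A$ are computed in $\cc{E}_A$, and continuity for the canonical topology follows because $u^*$, as the inverse image of a geometric morphism, preserves colimits and finite limits, hence the universal effective epimorphic families that constitute the canonical covers. For each 2-cell $\gamma\colon u \Rightarrow v$ of $\cc{A}$, the components of the natural transformation $u^*\Rightarrow v^*$ at objects of $\cc{C}_A$ are morphisms in $\cc{C}_B$ by fullness, so the restriction defines a 2-cell in $\cc{S}it$. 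All of these restrictions are strict, so the assignments $A \mapsto \cc{C}_A$, $u\mapsto u^*|_{\cc{C}_A}$, $\gamma\mapsto \gamma|_{\cc{C}_A}$ constitute a strict 2-functor $\cc{A}^{op}\to\cc{S}it$, and the inclusions $i_A$ form a strict 2-natural transformation, yielding the commutative square in (ii).

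The main obstacle is the simultaneous closure bookkeeping: one cannot close any individual $\cc{C}_A$ first and then the others, because stability under every $u^*$ couples every $\cc{C}_A$ to every $\cc{C}_B$. The countable iteration nonetheless remains within the small universe because finite limits and $u^*$-images are finitary operations, so anything introduced at stage $n$ yields only a small set of new objects at stage $n+1$; smallness therefore propagates through all $\omega$ steps, and continuity of the restricted functors, once the closure condition is in hand, is routine.
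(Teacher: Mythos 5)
Your proposal is correct and follows essentially the same strategy as the paper: start from generators, alternately close under finite limits and under the images of all transition functors $u^*$ in a countable iteration, take the union, and observe that smallness persists because $\cc{A}$ is small and that the containments $u^*(\cc{C}_A)\subseteq\cc{C}_B$ make the arrows and $2$-cells restrict automatically. The extra verification that the restricted functors are continuous for the canonical topology is a routine addition the paper leaves implicit.
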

\begin{proof}
It is well known that any small set $\cc{C}$ of generators in a topos
can be enlarged so as to determine a (non canonical) small full subcategory 
$\overline{\cc{C}} \supset \cc{C}$ closed under finite
limits: Choose a limit cone for each finite diagram, and repeat
this in a denumarable process. On the other hand, for the validity of
condition ii) it is enough that for each transition functor $\cc{E}_A
\mr{u^*} \cc{E}_B$ and object $c \in \cc{C}_A$, we have $u^*(c) \in \cc{C}_B$
(with this, natural transformations restrict automatically).

Let's start with any set of generators $\cc{R}_A \subset \E_A$ for all
$A\in \cc{A}$. We
will naively add 
objects to these sets to remedy the failure of each condition
alternatively. In this way we achieve simultaneously the two
conditions:

Define $\cc{C}^0_A = \overline{\cc{R}}_A \supset
\cc{R}_A$.  
Define $\cc{R}_A^{n+1} =
\bigcup\limits_{X \xto u A} 
u^*(\cc{C}^n_X)$.  
	$\cc{R}_A^{n+1}$ is small because $\A$ is small.
	$\cc{C}^n_X \subset \cc{R}_A^{n+1}$ due to $id_A$.
	Suppose now $c\in \cc{R}_A^{n+1}$, $c=u^*(d)$ with $d\in
        \cc{C}^n_X$, and
        let $A
        \mr{v} B$ in $\cc{A}$.  
We have
$v^*(c)=v^*u^*(d) = (vu)^*(d)$,
	thus $v^*(c) \in \cc{R}_B^{n+1}$.
	Define $\C_A^{n+1} = \overline{\cc{R}_A^{n+1}} \supset
          \cc{R}_A^{n+1}$. 
	Then, it is straightforward to check that 
	$\C_A = \bigcup\limits_{n\in \mathbb{N}}\C_A^n$ satisfy the two
conditions.
\end{proof}

A generalization of lemma \ref{Diaconescubis} to pseudocones
holds.

\begin{lemma} \label{pseudodiaconescu}
Given any 2-diagram of topoi $\A^{op} \xto \E
\cc{T}op$, a restriction
$\A^{op} \xto \C \sit$ as before, and any topos $\cc{F}$, the
inclusions 
$\cc{C}_A \subset \cc{E}_A$ induce a restriction
functor 
\mbox{$pc\cc{T}op^*(\E,\F) \xto {\rho} pc\cc{S}it^*(\C,\F)$} which
is an equivalence of categories.
\end{lemma}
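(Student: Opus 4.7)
The plan is to reduce the statement to the level-wise equivalence provided by Lemma \ref{Diaconescubis}, applied at each $A \in \cc{A}$. That equivalence is fully faithful on natural transformations and essentially surjective on functors, and I would use these two properties systematically: fullness to lift 2-cell data from the $\C_A$'s to the $\E_A$'s, and faithfulness to verify any equation in $\cc{T}op^*$ by passing to its image in $\sit^*$.

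First I would check that $\rho$ is well-defined. If $h \in pc\cc{T}op^*(\E,\F)$, each $h_A: \E_A \to \F$ restricts along $i_A: \C_A \hookrightarrow \E_A$ to $\rho(h)_A: \C_A \to \F$ in $\sit^*$, and the structure 2-cell $h_u: h_A \to h_B u^*$ restricts to a 2-cell $h_A|_{\C_A} \to h_B|_{\C_B} \circ u^*|_{\C_A}$, thanks to condition (ii) of Proposition \ref{res} (which forces $u^*$ to restrict to $\C_A \to \C_B$). The pseudocone axioms and the modification axiom restrict literally.

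For fully-faithfulness of $\rho$ on modifications: given $h, h' \in pc\cc{T}op^*(\E,\F)$ and a modification $\sigma: \rho(h) \Rightarrow \rho(h')$, I would apply \ref{Diaconescubis} at each $A$ to lift $\sigma_A: h_A|_{\C_A} \Rightarrow h'_A|_{\C_A}$ uniquely to $\tilde\sigma_A: h_A \Rightarrow h'_A$ in $\cc{T}op^*$. The modification axiom pcM for $\tilde\sigma$ is a 2-cell equation in $\cc{T}op^*$; by faithfulness of restriction, it is equivalent to pcM for $\sigma$, which holds. Uniqueness of the lifts gives faithfulness of $\rho$; their existence gives fullness.

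For essential surjectivity, given $k \in pc\sit^*(\C,\F)$, use \ref{Diaconescubis} at each $A$ to pick $h_A \in \cc{T}op^*(\E_A, \F)$ together with an invertible $\varphi_A: h_A|_{\C_A} \cong k_A$. For each $u: A \to B$, lift the invertible 2-cell $(\varphi_B u^*)^{-1} \circ k_u \circ \varphi_A: h_A|_{\C_A} \to h_B u^*|_{\C_A}$ to a 2-cell $h_u: h_A \to h_B u^*$ in $\cc{T}op^*$; its invertibility is automatic because the equivalence reflects isomorphisms. The pseudocone axioms pc0, pc1, pc2 for $(h_A, h_u)$ are 2-cell equations in $\cc{T}op^*$ that, upon restriction to the $\C_A$, reduce via conjugation by the $\varphi_A$ (exactly as in the proof of Lemma \ref{translacion}) to the corresponding axioms for $k$; faithfulness of restriction transfers them back to $\cc{T}op^*$. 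By construction the $\varphi_A$ then assemble into an invertible modification $\rho(h) \cong k$, the axiom pcM holding on the nose by the very definition of $h_u$. The only real obstacle will be the bookkeeping around these axioms — recognizing each equation in $\cc{T}op^*$ as the unique lift of its image in $\sit^*$ — but once that is set up, the fully-faithfulness of \ref{Diaconescubis} does all the work.
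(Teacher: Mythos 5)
Your proposal is correct and follows essentially the same route as the paper: apply Lemma \ref{Diaconescubis} levelwise, use its essential surjectivity to choose the $h_A$ and $\varphi_A$, transport the pseudocone structure by conjugation (the paper invokes Lemma \ref{translacion} for exactly the computation you inline), lift the resulting 2-cells by fullness, and verify all pseudocone and modification equations by faithfulness of the restriction. The only cosmetic difference is that the paper packages the conjugation step as a citation of Lemma \ref{translacion} rather than writing out the conjugated 2-cells explicitly.
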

\begin{proof}
The restriction functor $\rho$ is just a particular case of \ref{3cat},
so it is well defined. We will check that it is essentially surjective
and fully-faithful. 
The
following diagram 
illustrates the situation:
$$
\xymatrix@R=1ex
         {
             {} & {} & {} & {} & {} & {}
          \\
            \cc{C}_A  \ar@{^(->}^{i_A}[r] \ar[dd]^{u^*} 
                      \ar@(u,u)[rrrd]^{g^*_A}
          & \cc{E}_A  \ar[dd]^{u^*} \ar[rrd]^{h_A^*}
                      \ar@{}[rrrru]^<{\hspace{5ex}\cong\, \varphi_A}
          \\ 
             {} & {} & {{}^{\hspace{-3ex}\Downarrow h_u}} & \cc{F}
          \\ 
            \cc{C}_B  \ar@{^(->}^{i_B}[r]
                      \ar@{}[uur]|\equiv
                      \ar@(d,d)[rrru]_{g^*_B}
          & \cc{E}_B  \ar[rru]_{h_B^*}
                      \ar@{}[rrrd]_<{\hspace{5ex}\cong\,\varphi_B}
          \\
            {} & {} & {} & {} & {}
        }
$$

\emph{essentially surjective}: 
Let \mbox{$g \in pc\cc{S}it^*(\C,\F)$.} For each $A \in
\A$, take by lemma \ref{Diaconescubis} $\E_A \xto {h^*_A} \F$,
        $\varphi_A$, 
\mbox{$h_A^* i_A \stackrel{\varphi_A}{\simeq}  g^*_A$}. 
By lemma \ref{translacion}, $h^*i$ inherits a pseudocone structure
such that $\varphi$ becomes a 
pseudocone isomorphism. 	
	For each arrow $A \xto u B$ we have $(h^*i)_A
\stackrel{(h^*i)_u}{\Rightarrow} (h^*i)_B u^*$.  Since $\rho_A$ is fully-faithful, there exists a unique $h_A^*
\stackrel{h_u}{\Rightarrow} h_B^* u^*$ extending $(h^*i)_u$. In this way we obtain 
data \mbox{$h^* = (h^*_A,\, h_u)$} that restricts to a
pseudocone. Again from 
the fully-faithfulness of each $\rho_A$ it is straightforward to check
that it satisfies the pseudocone equations \ref{PCequations}.

\vspace{1ex}

\emph{fully-faithful:}
Let \mbox{$h^*, l^* \in pc\cc{T}op^*(\E,\F)$} be two pseudocones, and
let  
$\widetilde{\eta}$ be a morphism between the pseudocones $h^*i$ and
$l^*i$. 
	We have natural transformations $\xymatrix{h^*_A i_A
\ar@{=>}[r]^{\widetilde{\eta_A}} & l^*_A i_A}$. 
	Since the inclusions $i_A$ are dense, we can extend
        $\widetilde{\eta_A}$  
uniquely to $\xymatrix{h^*_A \ar@{=>}[r]^{\eta_A} & l^*_A}$  such
that $\widetilde{\eta} = \eta \,i$. As before,  from 
the fully-faithfulness of each $\rho_A$ it is straightforward to check
that $\eta = (\eta_A)$  satisfies the morphism of pseudocone equation \ref{PCequations}. 
\end{proof}

\begin{theorem} \label{main}
	Let $\A^{op}$ be a small 2-filtered 2-category, and $\A^{op}
        \xto \E \cc{T}op$ be a 2-functor. Let  \mbox{$\A^{op}
\xto \C \cc{S}it$} be a restriction to small sites as in
\ref{res}. Then, the topos of sheaves $\widetilde{\Colim{\cc{C}}}$ on
the site $\Colim{\cc{C}}$ of \ref{sitelimit} is a bilimit of 
$\cc{E}$ in $\cc{T}op$, or, equivalently, a bicolimit in
$\cc{T}op^*$.  
\end{theorem}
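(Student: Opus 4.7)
The plan is to assemble, for every topos $\F$, an equivalence of categories
\mbox{$\cc{T}op^*(\widetilde{\Colim{\C}},\,\F)\;\simeq\;pc\cc{T}op^*(\E,\,\F)$}
by concatenating three equivalences already proved in the paper, and then to identify this composite with $\rho_\lambda$ for a single pseudocone $\E \Mr{\lambda} \widetilde{\Colim{\C}}$ in $\cc{T}op^*$.

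First I would apply Lemma \ref{Diaconescu} to the site $\Colim{\C}$ (small by \ref{sitelimit}) and its canonical morphism $\Colim{\C} \xto{\e^*} \widetilde{\Colim{\C}}$, obtaining $\cc{T}op^*(\widetilde{\Colim{\C}},\F) \simeq \sit^*(\Colim{\C},\F)$ via composition with $\e^*$. Second, by Theorem \ref{sitelimit} itself, composition with the pseudocolimit pseudocone $\C_A \xto{\lambda_A^*} \Colim{\C}$ is an isomorphism $\sit^*(\Colim{\C},\F) \cong pc\sit^*(\C,\F)$. Third, Lemma \ref{pseudodiaconescu} provides an equivalence $pc\cc{T}op^*(\E,\F) \simeq pc\sit^*(\C,\F)$ via restriction along the inclusions $i_A\colon \C_A \subset \E_A$. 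Pasting the first two and inverting the third yields the claimed equivalence, naturally in $\F$.

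To upgrade this to the statement that $\widetilde{\Colim{\C}}$ is a bicolimit in $\cc{T}op^*$, I would exhibit the pseudocone $\lambda$ by pushing the canonical site pseudocone $\C_A \mono \Colim{\C} \xto{\e^*} \widetilde{\Colim{\C}}$ through the essential-surjectivity step in the proof of Lemma \ref{pseudodiaconescu}: each $\lambda_A^*\colon \E_A \to \widetilde{\Colim{\C}}$ is the inverse-image functor extending the composite site morphism, supplied by Lemma \ref{Diaconescubis}, with pseudocone coherences transported via Lemma \ref{translacion}. By construction the composite of the three equivalences above sends a morphism $\widetilde{\Colim{\C}} \xto{f^*} \F$ to the pseudocone with components $f^* \lambda_A^*$, i.e.\ it agrees with $\rho_\lambda$ up to a canonical isomorphism of pseudocones, so $\rho_\lambda$ is an equivalence for every $\F$. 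The equivalent statement about bilimits in $\cc{T}op$ follows from $\cc{T}op(\F,\E) \cong \cc{T}op^*(\E,\F)^{op}$.

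The main obstacle I anticipate is purely bookkeeping: verifying that the three equivalences, when composed, really give $\rho_\lambda$ and not some variant twisted by an uncontrolled modification. This amounts to applying the functoriality of composition of pseudonatural transformations (see (\ref{3cat})) and the uniqueness of the extensions supplied by Lemmas \ref{Diaconescubis} and \ref{translacion}; it is essentially the same manoeuvre already carried out in the proof of Lemma \ref{pseudodiaconescu}, so no substantively new idea should be needed.
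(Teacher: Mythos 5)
Your proposal is correct and follows essentially the same route as the paper: the authors form the same square of functors $\rho_\varepsilon$, $\rho_\lambda$, $\rho$, and $\rho_l$ (commuting up to isomorphism), invoke Lemmas \ref{Diaconescu}, \ref{pseudodiaconescu} and Theorem \ref{sitelimit} for the first three, and conclude that $\rho_l$ is an equivalence, with the pseudocone $l^*$ obtained exactly as you describe from the essential-surjectivity part of Lemma \ref{pseudodiaconescu}. The bookkeeping you anticipate is handled in the paper simply by the statement that the square commutes up to isomorphism, which is guaranteed by the choice $l^*i \simeq \e^*\lambda^*$.
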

\begin{proof}
	Let $\l^*$ be the pseudocolimit pseudocone $\C_A \xto {\l_A^*}
        \Colim{\cc{C}}$ in the \mbox{2-category $\cc{S}it^*$}
        (\ref{sitelimit}). Consider the composite pseudocone 
\mbox{$\C_A \xto {\l_A^*}  \Colim{\cc{C}} \mr{\varepsilon} 
                                    \widetilde{\Colim{\cc{C}}}$} and
let  $l^*$ be a
pseudocone from $\E$ to $\widetilde{\Colim{\cc{C}}}$ such that $l^*i \simeq
\e^*\l^*$ 
given by lemma \ref{pseudodiaconescu}. We have the following diagrams
commuting up to an isomorphism:
$$
\xymatrix
        {
          \cc{F} 
        & \widetilde{\Colim{\cc{C}}}
                                \ar[l]
                                \ar@{}@<-1.3ex>[rd]^{\cong}
        & \Colim{\cc{C}} \ar[l]_{\varepsilon^*}
        \\
          {} 
        & \cc{E} \ar[u]_{l^*} 
        & \cc{C} \ar[u]_{\lambda^*} \ar[l]_{i} 
        }
\hspace{5ex}         
\xymatrix
        {
          \cc{T}op^*(\widetilde{\Colim{\cc{C}}},\, \cc{F}) 
                                \ar[d]^{\rho_l}
                                \ar[r]^{\rho_\varepsilon}
                                \ar@{}@<-1.3ex>[rd]^{\cong}
        & \cc{S}it^*(\Colim{\cc{C}},\, \cc{F})
                                \ar[d]^{\rho_\lambda}
        \\
          pc\cc{T}op^*(\cc{E},\,\cc{F})
                                \ar[r]^{\rho}
        & pc\cc{S}it^*(\cc{C},\,\cc{F})
       }
$$
In the diagram on the right the arrows $\rho_\varepsilon$,
$\rho_\lambda$ and $\rho$ are equivalences of categories
(\ref{Diaconescu}, \ref{sitelimit} and \ref{pseudodiaconescu}
respectively), so it follows that $\rho_l$ is an equivalence. This
finishes the proof.
\end{proof}
This theorem shows the existence of small 2-cofiltered bilimits in the
\mbox{2-category} of topoi and geometric morphisms. But, it shows more,
namely, that given any small 2-filtered diagram of topoi, without loss of
generality, we can construct a
small site with finite limits for the bilimit topos out of a 2-cofiltered sub-diagram
of small sites with finite limits. However, this depends on the
\emph{axiom of choice} (needed for Proposition \ref{res}). We notice
for the interested reader that if we allow large sites (as in Theorem
\ref{sitelimit}), we can take the topoi themselves as sites, and the
proof of theorem \ref{main} with $\cc{C} = \cc{E}$ does not use
Proposition \ref{res}. Thus, without the use of choice we have: 
\begin{theorem} \label{main2}
	Let $\A^{op}$ be a small 2-filtered 2-category, and $\A^{op}
        \xto \E \cc{T}op$ be a 2-functor. Then, the topos of sheaves $\widetilde{\Colim{\cc{E}}}$ on
the site $\Colim{\cc{E}}$ of \ref{sitelimit} is a bilimit of 
$\cc{E}$ in $\cc{T}op$, or, equivalently, a bicolimit in
$\cc{T}op^*$.  
\end{theorem}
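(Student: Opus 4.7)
The plan is to imitate the proof of Theorem \ref{main}, taking $\cc{C}=\cc{E}$, so that the restriction step via Lemma \ref{pseudodiaconescu} (and hence Proposition \ref{res}) becomes unnecessary. The key observation is that both Theorem \ref{sitelimit} and Lemma \ref{Diaconescu} continue to apply when the underlying sites are large, provided they still carry a small set of topological generators.

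First I would view the diagram $\cc{A}^{op} \xto{\cc{E}} \cc{T}op$ as a 2-diagram $\cc{A}^{op} \to \cc{S}it$ by equipping each topos $\cc{E}_A$ with its canonical topology; each $\cc{E}_A$ is finitely complete and admits a small set of topological generators (any small set of generators of the Grothendieck topos), so it qualifies as a site in the sense of this paper. Applying Theorem \ref{sitelimit} to this diagram then produces the pseudocolimit site $\Colim{\cc{E}}$ together with a pseudocone $\cc{E}_A \xto{\lambda_A^*} \Colim{\cc{E}}$ in $\cc{S}it^*$, inducing an isomorphism
$$\cc{S}it^*[\Colim{\cc{E}},\, \cc{F}] \xto{\rho_\lambda} pc\cc{S}it^*[\cc{E},\, \cc{F}]$$
for every topos $\cc{F}$. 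Although $\Colim{\cc{E}}$ is large, the objects $\lambda_A^* c$ (for $A \in \cc{A}$ and $c$ ranging over a set of topological generators of $\cc{E}_A$) furnish a small set of topological generators for $\Colim{\cc{E}}$, since $\cc{A}$ itself is small.

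Next I would form the topos $\widetilde{\Colim{\cc{E}}}$ together with the canonical site morphism $\Colim{\cc{E}} \xto{\varepsilon^*} \widetilde{\Colim{\cc{E}}}$. Lemma \ref{Diaconescu} then gives an equivalence
$$\cc{T}op^*(\widetilde{\Colim{\cc{E}}},\, \cc{F}) \xto{\rho_\varepsilon} \cc{S}it^*(\Colim{\cc{E}},\, \cc{F}).$$
Since $\cc{T}op^* \subset \cc{S}it^*$ is a full subcategory, one has the identification $pc\cc{T}op^*(\cc{E},\,\cc{F}) = pc\cc{S}it^*(\cc{E},\,\cc{F})$, and the composition $\rho_\lambda \circ \rho_\varepsilon$ is precisely precomposition with the pseudocone $\varepsilon^* \lambda^*$. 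This yields an equivalence $\cc{T}op^*(\widetilde{\Colim{\cc{E}}},\cc{F}) \simeq pc\cc{T}op^*(\cc{E},\cc{F})$, which by definition makes $\widetilde{\Colim{\cc{E}}}$ a bicolimit of $\cc{E}$ in $\cc{T}op^*$, equivalently a bilimit in $\cc{T}op$.

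The only delicate point is verifying that Lemma \ref{Diaconescu} remains valid when applied to the large site $\Colim{\cc{E}}$; however, its proof requires only finite limits and a small set of topological generators, both of which $\Colim{\cc{E}}$ enjoys, so no new ingredient is needed and the choice-dependent step from Proposition \ref{res} is entirely avoided.
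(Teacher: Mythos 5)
Your proposal is correct and is essentially the paper's own argument: the authors prove Theorem \ref{main2} precisely by running the proof of Theorem \ref{main} with $\cc{C}=\cc{E}$ (each topos taken as a large site with its canonical topology and a small set of topological generators), so that Proposition \ref{res} and the restriction step of Lemma \ref{pseudodiaconescu} become unnecessary. Your added checks --- that $\Colim{\cc{E}}$ retains a small set of topological generators and that Lemma \ref{Diaconescu} only needs finite limits and topological generators --- are exactly the points the paper implicitly relies on.
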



\begin{thebibliography}{00}

\bibitem{G1} Artin M, Grothendieck A, Verdier J.,  \textsl{SGA 4 ,
(1963-64)}, Lecture Notes in Mathematics 269  Springer, (1972).

\bibitem{G2} Artin M, Grothendieck A, Verdier J.,  \textsl{SGA 4 ,
(1963-64)}, Springer Lecture Notes in Mathematics  270 (1972).

\bibitem{D} Dubuc, E. J., \textsl{2-Filteredness and the point of
  every Galois topos}, Proceedings of CT2007, Applied Categorical
  Structures, Volume 18, Issue 2, Springer Verlag (2010).

\bibitem{DS} Dubuc, E. J., Street, R., \textsl{A construction of
  2-filtered bicolimits of categories}, Cahiers de Topologie et
  Geometrie Differentielle, (2005).

\bibitem{G} Gray J. W., \textsl{Formal Category Theory: Adjointness
  for $2$-Categories}, Springer Lecture Notes in Mathematics 391
  (1974).

\bibitem{MM}  Mac Lane S., Moerdijk I., \textsl{Sheaves in Geometry 
and Logic}, Springer Verlag, (1992).

\bibitem{S} Street R.,\textsl{Limits indexed by category-valued $2$-functors} 
 J. Pure Appl. Alg. 8 (1976).

\end{thebibliography}
\end{document}